
\documentclass{amsart}
\usepackage{setspace, amssymb, amsmath, amsfonts}

\theoremstyle{plain}
\newtheorem{thm}{Theorem}[section]
\newtheorem{theorem}[thm]{Theorem}

\newtheorem{lemma}[thm]{Lemma}
\newtheorem{prop}[thm]{Proposition}
\newtheorem{proposition}[thm]{Proposition}

\theoremstyle{definition}
\newtheorem{defn}[thm]{Definition}
\newtheorem{definition}[thm]{Definition}

\theoremstyle{remark}
\newtheorem{remark}[thm]{Remark}

\newcommand{\spect}{\mathcal{S}(2,k)}

\newcommand{\okdvt}{\mathcal{O}_{k, \cdot, v}^{\cdot, D, \cdot}(2)}
\newcommand{\okdvn}{\mathcal{O}_{k, \cdot, v}^{\cdot, D, \cdot}(n)}
\newcommand{\xo}{X_{O}}
\newcommand{\apK}{\mathfrak{d}_{\scriptscriptstyle{pK}}}

\newcommand{\fraka}{{\mathfrak a}}

\newcommand{\kp}{k}

\newcommand{\ccu}{(\wtu,\Gamma_U, \pi_U)}
\newcommand{\tp}{\tilde{p}}
\newcommand{\wtu}{\widetilde{U}}
\newcommand{\tv}{\tilde{v}}
\newcommand{\tw}{\tilde{w}}

\DeclareMathOperator{\vol}{Vol}

\begin{document}

\title[Spectral and geometric bounds on 2-orbifold diffeomorphism type]{Spectral and geometric bounds on 2-orbifold diffeomorphism type}

\author[E. Proctor]{Emily Proctor}
\author[E. Stanhope]{Elizabeth Stanhope}

\thanks{{\it Keywords:} Spectral geometry \ Global Riemannian
  geometry \ Orbifolds} 

\thanks{2000 {\it Mathematics Subject Classification:}
Primary 58J53; Secondary 53C20.}


\begin{abstract}
We show that a Laplace isospectral family of two dimensional Riemannian orbifolds, sharing a lower bound on sectional curvature, contains orbifolds of only a finite number of orbifold category diffeomorphism types.  We also show that orbifolds of only finitely many orbifold diffeomorphism types may arise in any collection of 2-orbifolds satisfying lower bounds on sectional curvature and volume, and an upper bound on diameter.  An argument converting spectral data to geometric bounds shows that the first result is a consequence of the second.
\end{abstract}

\maketitle

\begin{center}
\begin{small}
\noindent Emily Proctor \\ Department of Mathematics \\ Middlebury College \\ Middlebury, VT 05753

\noindent \texttt{eproctor@middlebury.edu} \rm

\bigskip

\noindent Elizabeth Stanhope \\ Department of
Mathematical Sciences \\ Lewis and Clark College \\ 0615 SW Palatine Hill Road, MSC 110 \\ Portland, OR 97219

\noindent \texttt{stanhope@lclark.edu} \rm
\end{small}
\end{center}

\vspace{5mm}

\section{Introduction}

The fact that the Laplace spectrum of a compact Riemannian manifold does not determine its geometry became popularly known with the 1992 announcement by Gordon, Webb and Wolpert \cite{GWW} that ``One cannot hear the shape of the drum."  Determining the information that the Laplace spectrum of a manifold \emph{does} contain about the geometry or topology of that manifold has been a productive endeavor for many decades.  One way to approach this question is to quantify the similarities shared by manifolds with a given spectrum.  For example, Osgood, Phillips and Sarnak \cite{OPS} showed that the spectrum of a closed surface determines the metric of the surface up to a family of metrics which are compact in the $C^{\infty}$-topology.  In the presence of a uniform lower bound on sectional curvature, Brooks, Perry and Petersen \cite{BPP} showed that isospectral sets of compact Riemannian manifolds, of dimension different than four, are finite up to diffeomorphism type.

In this paper we extend the result of Brooks, Perry and Petersen mentioned above to the category of two dimensional Riemannian orbifolds.  We prove the following theorem for compact, closed Riemannian 2-orbifolds:

\medskip

\noindent \bf{Main Theorem 1:} \it
For a fixed real number $k$, let $\spect$ denote the set of isospectral Riemannian 2-orbifolds with sectional curvature uniformly bounded below by $k$.  The collection $\spect$ contains orbifolds of only finitely many orbifold diffeomorphism types.
\rm

\medskip

\noindent In the process of obtaining this first theorem we prove a more general finiteness theorem with geometric bounds rather than spectral ones.  In particular we obtain the following result for compact, closed Riemannian 2-orbifolds:

\medskip

\noindent \bf{Main Theorem 2:} \it For $D>0$, $v >0$ and $k$ fixed real numbers, let $\okdvt$ denote the set of Riemannian 2-orbifolds with sectional curvature uniformly bounded below by $k$, diameter bounded above by $D$, and volume bounded below by $v$.  The collection $\okdvt$ contains orbifolds of only finitely many orbifold diffeomorphism types.
\rm

\medskip

\noindent The notation $\okdvt$ follows that in \cite{GP}.  By adding and deleting bounds, this notation can be used to express related statements.  For example, to formulate an orbifold analogue of the Cheeger finiteness theorem \cite{Ch} one would add an upper bound $K$ on sectional curvature and consider the set of $n$-orbifolds denoted $\mathcal{O}_{k, \cdot, v}^{K, D, \cdot}(n)$.

\medskip

An orbifold is a mild generalization of a manifold obtained by allowing coordinate patches to be modeled on $\mathbb{R}^n$ modulo the action of a finite group.  Orbifolds first arose as objects of study in algebraic geometry over a century ago.  Satake's \cite{S1} formulation of orbifolds in the language of differential geometry, under the name of $V$-manifold, appeared in 1956.  Later Thurston \cite{T} popularized $V$-manifolds among topologists and differential geometers under the name ``orbifolds."  Recently, interest in orbifolds has risen markedly due to their use in string theory (see \cite{ALR} for example).  This paper contributes to a new and expanding literature in the spectral geometry of Riemannian orbifolds.  For a concise survey of this literature the authors recommend the introduction to \cite{DGGW}.

We begin with a review of orbifold structures in Section \ref{background}.  In Section \ref{singularbounds} we show how the geometric bounds on $\okdvt$ imply finiteness for two aspects of the topology of an orbifold's singular set.  An application of Perelman's Stability Theorem in Section \ref{topology} shows that the underlying space of an orbifold in $\okdvt$ has one of only finitely many homeomorphism types.  These controls on the singular set and the underlying space of an orbifold in $\okdvt$ are combined to prove Main Theorem 2 in Section \ref{finiteoflddiffeo}.  Main Theorem 1 then follows from a short argument, recalled from \cite{St}, which shows that $\spect$ is a subset of $\okdvt$.

\subsection{Acknowledgments}  We thank Carolyn Gordon and David Webb for helpful discussions.  We also thank Peter Storm for a suggestion that clarified the proof of Proposition~\ref{okdvtprop2}.

\eject


\section{Orbifold background}\label{background}

In this section we detail the orbifold related definitions and notation that are used in what follows.  

\subsection{Definition of a Riemannian orbifold}

Just as a manifold is a topological space which locally has the structure of $\mathbb{R}^n$, an orbifold is a topological space which locally has the structure of $\mathbb{R}^n$ modulo the action of a finite group.   We have the following, which is a direct generalization of the definition of a manifold chart.

\begin{definition}
Let $\xo$ be a second countable Hausdorff topological space.  Given an open set $U$ contained in $\xo$, an \textit{orbifold chart} over $U$ is a triple $(\wtu, \Gamma_U, \pi_U)$ such that:
\begin{enumerate}
\item $\wtu$ is a connected open subset of $\mathbb{R}^n$,
\item\label{groupaction} $\Gamma_U$ is a finite group which acts on $\wtu$ by diffeomorphisms,
\item $\pi_U: \wtu \to U$ is a continuous map such that $\pi_U\circ\gamma = \pi_U$ for all $\gamma\in \Gamma_U$ and which induces a homeomorphism from $\wtu/\Gamma_U$ to $U$.
\end{enumerate} 
\end{definition}

As with manifolds, we cover the space $\xo$ with orbifold charts subject to a suitable compatibility condition (see page 2 in \cite{ALR}).  A smooth orbifold $O$ is the topological space $\xo$ together with a maximal atlas of orbifold charts.  The topological space $\xo$ is called the \textit{underlying space} of the orbifold.  

We note that if a group $\Gamma$ acts properly discontinuously on a manifold $M$, then the quotient space $M/\Gamma$ is an orbifold.  Any orbifold which can be realized as a quotient of a group action on a manifold in this way is called a \textit{good} orbifold.  Otherwise, the orbifold is called a \textit{bad} orbifold.

A Riemannian structure on an orbifold is defined by endowing the local cover $\wtu$ of each orbifold chart $\ccu$ with a $\Gamma_U$-invariant Riemannian metric.  By patching these local metrics together with a partition of unity we obtain a \emph{Riemannian orbifold}.  

\subsection{Local structures on Riemannian orbifolds}

Let $p$ be a point in an orbifold $O$ and take $\ccu$ an orbifold chart over a neighborhood $U$ of $p$.  If a point $\tp$ in $\pi_U^{-1}(p)$  has nontrivial isotropy, we say that $p$ is a \emph{singular} point.  The isomorphism class of the isotropy group of $\tp$ is independent of both choice of element of $\pi_U^{-1}(p)$ and choice of orbifold chart about $p$.  This isomorphism class is called the \emph{isotropy type} of $p$.  As in \cite{DGGW}, we call a chart about $p$ in a Riemannian orbifold a \emph{distinguished chart of radius $r$} if $\wtu$ is a convex geodesic ball of radius $r$ centered at point $\tp$ with $\pi_U(\tp) = p$.  In this situation the isotropy type of $p$ is the isomorphism class of the group coming from the chart, $\Gamma_U$.

We denote the tangent bundle of an orbifold $O$ by $TO$.  Here we shall simply recall the structure of a fiber of $TO$ over point $p\in O$, but refer the interested reader to \cite{S2} for more details.  Take $\ccu$ a distinguished chart about $p$ and let $\gamma \in \Gamma_U$.  The differential of $\gamma$ at $\tp$ acts on $T_{\tp}\wtu$.  Let $\Gamma_{U*\tp}$ denote the set of all such differentials.  The fiber of $TO$ over $p$, denoted $T_pO$, is defined to be $T_{\tp}\wtu/\Gamma_{U*\tp}$.  Fiber $T_pO$ is independent of choice of orbifold chart and is called the \emph{tangent cone to} $O$ \emph{at} $p$.  When $O$ is a Riemannian orbifold, the set of unit vectors in $T_pO$ is called the \emph{unit tangent cone to} $O$ \emph{at} $p$ and is denoted $S_pO$.

The tangent cone at a point in an orbifold need not be a vector space.  One consequence of this for Riemannian orbifolds is that the measure of the angle between vectors in a tangent cone needs a careful definition.  

\begin{defn}\label{angledef} Let $p$ be a point in a Riemannian orbifold that lies in an orbifold chart $\ccu$.  Take point $\tp \in \pi_U^{-1}(p)$.  Let $\pi_{U*\tp}$ denote the differential of $\pi_U$ at $\tp$.  For vectors $v$ and $w$ in $T_pO$, let $\tv_1, \tv_2, \dots, \tv_r$
  denote the elements of the set $(\pi_{U*\tp})^{-1}(v)$, and $\tw_1, \tw_2, \dots, \tw_s$
  denote the elements of the set $(\pi_{U*\tp})^{-1}(w)$.  The angle between $v$ and 
$w$ in $T_pO$ is defined to be
\begin{align*}
\angle(v,w) = \min_{\substack{ 
                       i = 1, 2 \dots, r \\
                       j = 1, 2 \dots, s 
                   }}   \{\angle(\tv_i, \tw_j)\}.
\end{align*}
\end{defn}

Finally, we are able to discuss curvature on a Riemannian orbifold by using local manifold covers.  We say that a Riemannian orbifold has sectional (resp. Ricci) curvature bounded below by $k$ if each point in the orbifold can be locally covered by a manifold with sectional (resp. Ricci) curvature bounded below by $k$. 

\subsection{Global structures on Riemannian orbifolds}\label{globalstructures}

We give a length space structure to a Riemannian orbifold using the distance function, 
\[d(p,q) = \inf\{\text{Length}(c)  :  c \ \text{is a continuous curve from} \ p \ \text{to} \ q\}.\]
When an orbifold $O$ is complete with respect to this metric, any two points in $O$ can be joined by a curve that achieves the distance between them.  Such a curve, parametrized with respect to arclength, is called a \emph{segment} in $O$.   Details on these ideas are given in \cite{Bz}.  

Smooth functions on an orbifold, as well as the Laplace operator acting on those functions, are described in \cite{Chi}.  For compact Riemannian orbifolds, by \cite{Chi} and \cite{DGGW}, the eigenvalue spectrum of the Laplace operator is a discrete set of positive real numbers, tending to infinity.  Two orbifolds are said to be \textit{isospectral} if they have the same eigenvalue spectrum.  

\begin{remark}\label{spectrumremark}
Chiang's original proof that the eigenvalue spectrum is a discrete set tending to infinity is based on Satake's original definition of $V$-manifold, for which the singular set has codimension at least 2.   An orbifold is a slight generalization of the notion of a $V$-manifold which has no restriction on the singular set.   Chiang's proof is extended in \cite{DGGW} to include all compact Riemannian orbifolds.  At the time that \cite{St} was published, only Chiang's result was known and hence many of the results in \cite{St} appear to depend on the codimension $\geq 2$ condition, though in fact, they do not.  We use some of these results from \cite{St} in Sections~\ref{singularbounds} and \ref{finiteoflddiffeo} below.  However, for this paper we state them using the most general definition of a Riemannian orbifold.
\end{remark}

\subsection{Smooth maps between orbifolds}

We now generalize the notion of a diffeomorphism of manifolds to the orbifold setting.  An orbifold diffeomorphism represents an equivalence of  the smooth orbifold structure as well as of the underlying topological space.  

The two definitions below come from \cite{ALR}.

\begin{definition}\label{weakmap}
Let $O_1$ and $O_2$ be orbifolds.  A \textit{smooth orbifold map} $f:O_1\to O_2$ consists of a continuous map from $X_{O_1}$ to $X_{O_2}$ such that for any $x\in O_1$ there are orbifold charts $(\widetilde U, \Gamma_U, \pi_U)$ over neighborhood $U$ of $x$ and $(\widetilde V, \Gamma_V, \pi_V)$ over neighborhood $V$ of $f(x)$ such that:
\begin{enumerate}
\item $f(U) \subset V$,
\item there exists a smooth lift $\tilde f$ of $f$ carrying $\widetilde U$ to $\widetilde V$ for which $\pi_V \circ \tilde f = f \circ \pi_U$.
\end{enumerate}
\end{definition}

\begin{definition}\label{diffeo}
Orbifolds $O_1$ and $O_2$ are diffeomorphic if there exist smooth orbifold maps $f:O_1\to O_2$ and $g:O_2\to O_1$ such that $f \circ g = 1_{O_2}$ and $g \circ f = 1_{O_1}$.
\end{definition}

\begin{remark}  The literature contains several different definitions of maps between orbifolds.  Although maps given by Definition~\ref{weakmap} are weak in the sense that they behave poorly with respect to bundles, they suffice for the present discussion.  In particular, effective orbifolds which are diffeomorphic via Definition~\ref{diffeo} have strongly diffeomorphic groupoid presentations. 
\end{remark}

\vspace{15mm}


\section{Bounds on the singular set of a 2-orbifold}\label{singularbounds}

Singular points in a smooth 2-orbifold have one of only three possible forms.  A cone point is locally modeled on a disk in the plane modulo the action of a cyclic group of rotations, a mirror point is modeled on a disk modulo a reflection, and a dihedral point is modeled on a disk modulo the action of a dihedral group. 

There are also only three forms that a connected component of the singular set of a compact 2-orbifold without boundary can take.  If the underlying space of the 2-orbifold is a surface without boundary, the only singular points are isolated cone points.  When the underlying space has non-empty boundary, each component of its boundary is circular.  These circles do not form an orbifold boundary, however, because they are either reflector circles, made up entirely of mirror points, or reflector crowns, which consist of a finite number of dihedral points linked together by continua of mirror points.  Thus, when the underlying space of a compact 2-orbifold has boundary, the connected components of its singular set are some combination of cone points, mirror circles and reflector crowns.  For further details, see Section 13.3 in \cite{T}.

In this section we establish a universal upper bound on the number of connected components of the singular set of any orbifold in $\okdvt$.  In addition we prove that the number of dihedral points in an orbifold in $\okdvt$ is universally bounded above.  These controls, combined results from Section \ref{topology} and \cite{St}, will be used in Section \ref{finiteoflddiffeo} to prove the two main theorems of this paper.

We begin with two technical lemmas.  Suppose that $K$ is a compact subset of complete Riemannian orbifold $O$.  For $p \in O$, let $\apK \subset S_pO$ denote the set of initial velocity vectors of segments running from $p$ to $K$.  We call $\apK$ the set of directions from $p$ to $K$.  Also, given a subset $\fraka$ of the unit $n$-sphere $S^n$, we define
\begin{align*}
\fraka(\theta) &= \{v \in S^n : \angle(\fraka,v) < \theta\}. 
\end{align*}

\begin{lemma}\label{technicallemma1} Let $O \in \okdvn$ and $p, q
  \in O$.   Then there exist $\alpha \in (0,
  \frac{\pi}{2})$ and $r>0$ such that if
\begin{align*}
\frak{d}_{pq}(\tfrac{\pi}{2} + \alpha) =
S_pO, \ \text{and} \ \frak{d}_{qp}(\tfrac{\pi}{2} + \alpha) = S_qO,
\end{align*}
then $d(p, q) \ge r$.  The constants $\alpha$ and $r$ depend only on $k$,
$D$, $v$ and $n$.
\end{lemma}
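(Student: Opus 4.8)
The plan is to prove the contrapositive: having fixed suitable $\alpha$ and $r$, I will show that if $d(p,q)<r$ then at least one of the two hypotheses must fail, so that whenever both hold one has $d(p,q)\ge r$. Throughout I will use that an orbifold in $\okdvn$ is an Alexandrov space of curvature bounded below by $k$ (via its local manifold covers), so that Toponogov comparison and the Bishop--Gromov inequality are available. As a preliminary I record the standard consequences of the bounds $k$, $D$, $v$: relative volume comparison gives a uniform lower bound $\vol B(x,\rho)\ge V_0(\rho)>0$ for the volume of metric balls (using $\vol B(x,D)=\vol O\ge v$), and, as in \cite{St}, a uniform upper bound on the order of each isotropy group; both depend only on $k$, $D$, $v$, $n$. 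These comparison facts will be used freely.

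The next step is to unpack the hypothesis when $d(p,q)=\epsilon$ is small. Lifting the segments from $p$ to $q$ to a local cover and descending to the quotient angle metric on $S_pO$, the condition $\mathfrak{d}_{pq}(\tfrac{\pi}{2}+\alpha)=S_pO$ says that the set of directions from $p$ to $q$ cannot lie in any open cap of $S_pO$ of angular radius $\tfrac{\pi}{2}-\alpha$; equivalently its smallest enclosing cap has radius at least $\tfrac{\pi}{2}-\alpha$. In particular $\mathfrak{d}_{pq}$ contains directions subtending a definite angle at $p$, so there are at least two distinct segments from $p$ to $q$; reversing one and concatenating with the other yields a geodesic loop of length $2\epsilon$ based at $p$ which, because the extra directions are forced by the nontrivial local (cone, mirror, or dihedral) structure, is not null in the orbifold. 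The symmetric hypothesis at $q$ provides the corresponding spread of directions there, guaranteeing that this short loop is genuinely two-sided rather than a degenerate back-and-forth along a single segment; and taking $\alpha$ small pins the loop's angle near $\tfrac{\pi}{2}$ at each vertex.

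The heart of the matter, and the step I expect to be the main obstacle, is to show that such a short essential loop is incompatible with the bounds unless $\epsilon$ is bounded below. A lower curvature bound controls the growth of the lengths of the parallel geodesic circles transverse to the loop: for $k\ge 0$ these circles cannot open out, so the component carrying the loop is a thin tube and $\vol O$ is of order $\epsilon\cdot D\to 0$; for $k<0$ opening a tube of width $\sim\epsilon$ out to a region of definite size costs length at least $\tfrac{1}{\sqrt{-k}}\log(1/\epsilon)\to\infty$, in conflict with $\mathrm{diam}\,O\le D$. In either case $\vol O\ge v$ and $\mathrm{diam}\,O\le D$ cannot both persist once $\epsilon$ is too small, which produces the desired $r=r(k,D,v,n)$. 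The difficulty is making this ``thin neck'' estimate quantitative and uniform on a possibly bad $2$-orbifold carrying only a one-sided curvature bound. I note that the statement can alternatively be obtained, with non-explicit constants, by a Gromov--Hausdorff compactness argument: a sequence of counterexamples with $\alpha_i\to 0$ and $d(p_i,q_i)\to 0$ would, by precompactness of $\okdvn$ and continuity of volume under non-collapsed convergence, subconverge to an $n$-dimensional Alexandrov space $X$ with $\vol X\ge v$ in which $p_i,q_i\to x$, and the limiting mutual criticality forces the space of directions at $x$ to degenerate, so that $X$ collapses there, contradicting the non-collapsing guaranteed by the volume lower bound.
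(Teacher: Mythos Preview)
The paper does not give a self-contained proof of this lemma: it simply cites Lemma~8.2 of \cite{St} and observes that the isolated-singularities hypothesis there is never used. So there is no internal argument to compare yours against, only the question of whether your sketch stands on its own.

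It does not. Two concrete gaps:

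\emph{(i) The passage from ``spread-out directions'' to a ``short essential loop'' is unjustified.} You write that $\mathfrak{d}_{pq}(\tfrac{\pi}{2}+\alpha)=S_pO$ forces at least two distinct segments from $p$ to $q$, and that concatenating them yields a loop which ``because the extra directions are forced by the nontrivial local (cone, mirror, or dihedral) structure, is not null.'' But nothing in the hypothesis forces the spread of $\mathfrak{d}_{pq}$ to come from isotropy at $p$: two nearby points in a smooth manifold can be joined by many segments (e.g.\ near-antipodal points on a round sphere), so the spread may be global rather than local, and the resulting loop can be perfectly contractible. Moreover the ``cone, mirror, dihedral'' trichotomy is two-dimensional, while the lemma is stated for $\okdvn$; your thin-neck estimate (``$\vol O$ is of order $\epsilon\cdot D$'') is likewise a surface computation. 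Even in dimension two you acknowledge that the quantitative step is ``the difficulty'' and do not carry it out.

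\emph{(ii) The compactness alternative is missing its key step.} Passing to a Gromov--Hausdorff limit $X$ with $p_i,q_i\to x$ is fine, but the assertion that ``limiting mutual criticality forces the space of directions at $x$ to degenerate'' is neither stated precisely nor argued. Directions from $p_i$ to $q_i$ have no obvious limit once $p_i$ and $q_i$ coalesce, and the hypothesis involves $\alpha_i\to 0$, so the covering condition weakens in the limit rather than strengthening; one needs a genuine argument (e.g.\ a rescaling to the tangent cone and a quantitative splitting/strainer statement) to extract a contradiction, and none is supplied.

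If you want a route that actually closes, the argument in \cite{St} proceeds via Toponogov hinge comparison applied to an arbitrary point $x\in O$: the covering hypotheses at $p$ and at $q$ control the angles in the hinges $xpq$ and $xqp$, which squeezes $O$ into a thin neighbourhood of the segment $pq$ and then contradicts the lower volume bound by relative volume comparison. That is the missing geometric input in your sketch.
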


\begin{proof}  The statement of this lemma is precisely that of Lemma 8.2 in \cite{St}, using the more general definition of an orbifold (see Remark~\ref{spectrumremark}) and without the requirement that $O$ have only isolated singularities.  The assumption about isolated singularities is never used in the proof given in \cite{St}, so the result holds for orbifolds with general singularities as well.
\end{proof}

The second technical lemma shows that, for $r>0$, there is a universal upper bound on the size of a set of pairwise $\ge r$-apart points in $O \in \okdvn$.  We recall that a \emph{minimal $\varepsilon$-net} is an ordered set of points $p_1, \dots, p_N$ in a metric space such that the open balls $B(p_i,\varepsilon)$ cover the metric space, but the open balls $B(p_i, \tfrac{\varepsilon}{2})$ are pairwise disjoint.  When the metric space is compact and connected, it is known that one can find a minimal $\varepsilon$-net in that space for any $\varepsilon>0$.

\begin{lemma}\label{technicallemma2}  Suppose that $O$ is an $n$-dimensional orbifold with diameter bounded above by $D>0$ and Ricci curvature greater than or equal to $(n-1)k$.  Also suppose that $\{p_1, p_2, \dots, p_m\}$ is a set of points in $O$ for which $d(p_i,p_j) \ge r>0$ with $i,j = 1, 2, \dots, m$, $i \ne j$.  Then there is a constant $C(r, k, D, n)$ such that $m \le C$.
\end{lemma}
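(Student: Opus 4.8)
The plan is to run the orbifold analogue of Gromov's volume-packing argument, with a Bishop--Gromov relative volume comparison supplying the needed estimates. First I would record the elementary geometric fact that the hypothesis $d(p_i,p_j)\ge r$ forces the balls $B(p_i,r/2)$ to be pairwise disjoint: if a point $x$ lay in both $B(p_i,r/2)$ and $B(p_j,r/2)$, then the triangle inequality for the length metric on $O$ (Section~\ref{globalstructures}) would give $d(p_i,p_j)<r$, a contradiction. Since these $m$ disjoint balls all lie inside $O$, their orbifold volumes sum to at most $\vol(O)$.

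The crux is to obtain a lower bound on $\vol(B(p_i,r/2))$ that is uniform in $i$ and comparable to $\vol(O)$. Here I would invoke Borzellino's orbifold Bishop--Gromov comparison (see \cite{Bz}): for a complete Riemannian orbifold with Ricci curvature at least $(n-1)k$, the ratio $\vol(B(p,R))/V_k(R)$ is non-increasing in $R$ for every $p$, where $V_k(R)$ is the volume of a metric ball of radius $R$ in the simply connected $n$-dimensional space form of constant curvature $k$. Because $\mathrm{diam}(O)\le D$, every point of $O$ lies within distance $D$ of $p_i$, so $B(p_i,D)=O$ up to a set of measure zero; applying the monotonicity at radii $r/2\le D$ gives
\[
\frac{\vol(B(p_i,r/2))}{V_k(r/2)} \ \ge\ \frac{\vol(B(p_i,D))}{V_k(D)} \ =\ \frac{\vol(O)}{V_k(D)},
\]
and hence $\vol(B(p_i,r/2))\ge \bigl(V_k(r/2)/V_k(D)\bigr)\,\vol(O)$.

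Combining the two steps yields $m\,\bigl(V_k(r/2)/V_k(D)\bigr)\,\vol(O)\le \sum_{i}\vol(B(p_i,r/2))\le \vol(O)$. Cancelling the positive, finite factor $\vol(O)$ produces $m\le V_k(D)/V_k(r/2)=:C(r,k,D,n)$, a bound depending only on the stated parameters, which is exactly the claim.

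The main obstacle is justifying the volume comparison in the orbifold category rather than on a manifold, and the subtlety is entirely local, at the singular points. Near a point with isotropy group $\Gamma$ the orbifold volume of a small ball is the covering-manifold volume divided by $|\Gamma|$, so $\vol(B(p,R))/V_k(R)$ tends to $1/|\Gamma|\le 1$, rather than to $1$, as $R\to 0$. What must be checked is that Bishop--Gromov monotonicity nonetheless persists through the singular set; this is verified by working in the local manifold covers and pushing the comparison down equivariantly, and it is the content established in \cite{Bz}. Once granted, the packing estimate is purely formal: the factors $|\Gamma|$ never obstruct the argument, since the numerator and denominator of the ratio both use the orbifold volume.
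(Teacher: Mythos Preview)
Your argument is correct and rests on the same two ingredients as the paper's proof: disjointness of balls of radius $r/2$ about the $p_i$, and Borzellino's orbifold relative volume comparison from \cite{Bz}. The execution differs slightly. The paper first constructs a minimal $\tfrac{r}{2}$-net $\{x_1,\dots,x_N\}$, applies Bishop--Gromov once at the net point whose $\tfrac{r}{4}$-ball has least volume to obtain $N\le \vol B^n_k(D)/\vol B^n_k(\tfrac{r}{4})$, and then notes that each $\tfrac{r}{2}$-ball of the net contains at most one $p_i$, so $m\le N$. You bypass the auxiliary net entirely by applying Bishop--Gromov at each $p_i$ directly, which is a bit cleaner and even gives the marginally better constant $V_k(D)/V_k(r/2)$ instead of $V_k(D)/V_k(r/4)$. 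Neither route needs the lower volume bound $v$, so both deliver a $C$ depending only on $r,k,D,n$ as stated.
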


\begin{proof}  Let $\{x_1, x_2, \dots, x_N\}$ be a minimal $\frac{r}{2}$-net in $O$.  Without loss of generality assume $B(x_1, \tfrac{r}{4})$ has the minimal volume among all of the $\frac{r}{4}$-balls about points in this net.  Because the $\frac{r}{4}$-balls are disjoint,
\[N \vol B(x_1, \tfrac{r}{4}) \le \sum_{i=1}^N \vol B(x_i, \tfrac{r}{4}) \le \vol O.\]
Thus $\vol B(x_1, \frac{r}{4}) \le \vol O/N$.  

Recall for $p \in O$ and $0 \le s \le S$, the Relative Volume Comparison Theorem for orbifolds in \cite{Bz} implies,
\begin{align}\label{inequality}
\frac{\vol B(p, S)}{\text{Vol}B(p, s)} \le \frac{\vol B^n_{\kp}(S)}{\text{Vol}B^n_{\kp}(s)}
\end{align}  
where $B^n_{\kp}(r)$ denotes the geodesic ball of radius $r$ in the simply connected $n$-dimensional space form of constant curvature $k$.  Now apply line~ (\ref{inequality}) with $p=x_1$, $s= \frac{r}{4}$ and $S=D$.  This yields
\begin{align}\label{volfact2}
\frac{\vol B(x_1, D)}{\text{Vol}B(x_1, \frac{r}{4})} \le \frac{\vol B^n_{\kp}(D)}{\text{Vol}B^n_{\kp}(\frac{r}{4})}.
\end{align}
Using $\vol B(x_1,D) = \vol O$ and $\vol B(x_1, r/4) \le \vol O/N$ we find that line (\ref{volfact2}) becomes
\begin{align*}
N \le \frac{\vol B^n_{\kp}(D)}{\text{Vol}B^n_{\kp}(\frac{r}{4})} = C(r, k, D, n),
\end{align*}
yielding a universal upper bound on the number of elements in the minimal $\frac{r}{2}$-net.

Because each pair of points in $\{p_1, p_2, \dots, p_m\}$ are at least a distance $r$ apart from each other, there can be at most one of these points per open $\frac{r}{2}$-ball.  Thus the bound on the number of elements in our minimal $\frac{r}{2}$-net  is also a bound on $m$.  In particular we have $m \le N \le C(r, k,D, v) $.
\end{proof}

\begin{prop}\label{components}  There is a universal upper bound on the number of connected components of the singular set of an orbifold in $\okdvt$.
\end{prop}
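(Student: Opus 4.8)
The plan is to drop one representative point into each connected component of the singular set and to show, using the two technical lemmas, that any such collection of representatives is forced to be pairwise at least a universal distance $r$ apart; the packing bound of Lemma~\ref{technicallemma2} then caps the number of components. Since an orbifold $O \in \okdvt$ is compact and complete, its singular set is closed, so each connected component $A_1, \dots, A_m$ is compact and, for $i \ne j$, the distance $d(A_i,A_j) = \min\{d(a,b) : a \in A_i,\ b \in A_j\}$ is realized by a minimizing segment between a nearest pair of points.

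The key step is to prove $d(A_i,A_j) \ge r$ for all $i \ne j$, where $\alpha$ and $r$ are the constants supplied by Lemma~\ref{technicallemma1} with $n=2$. Fix $i \ne j$, choose $p \in A_i$ and $q \in A_j$ realizing $d(A_i,A_j)$, and let $\gamma$ be a minimizing segment from $p$ to $q$. I would show that the hypotheses $\apq(\pt + \alpha) = S_pO$ and $\aqp(\pt + \alpha) = S_qO$ of Lemma~\ref{technicallemma1} both hold, which forces $d(A_i,A_j) = d(p,q) \ge r$. To check the condition at $p$ (the argument at $q$ is identical), I would split on the type of the singular point. If $p$ is a cone point or a dihedral point, its unit tangent cone $S_pO$ is the quotient of a circle by a nontrivial cyclic action, respectively of an arc by a dihedral action, and a short computation with Definition~\ref{angledef} shows that the angle between any two vectors of $S_pO$ is at most $\pt$; since $\gamma'(0) \in \apq$, every vector of $S_pO$ then lies within $\pt < \pt + \alpha$ of $\apq$, and the hypothesis holds automatically. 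If instead $p$ is a mirror point, then $S_pO$ is an arc of length $\pi$ whose endpoints are the directions tangent to the mirror locus and whose midpoint is the inward normal; because $p$ is a nearest point of $A_i$ to $A_j$, the first variation formula forces $\gamma$ to leave $A_i$ orthogonally, so $\apq$ contains this midpoint, and again every vector of the length-$\pi$ arc lies within $\pt < \pt + \alpha$ of $\apq$.

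With $d(A_i,A_j) \ge r$ in hand, I would pick an arbitrary representative $x_i \in A_i$ for each component. Then $d(x_i,x_j) \ge d(A_i,A_j) \ge r$ whenever $i \ne j$, so $\{x_1,\dots,x_m\}$ is an $r$-separated set. In dimension two a lower sectional bound of $k$ is a lower Ricci bound of $(n-1)k = k$, so Lemma~\ref{technicallemma2} applies and gives $m \le C(r,k,D,2)$, a bound depending only on $k$, $D$, and $v$. This is the asserted universal bound on the number of connected components.

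I expect the main obstacle to be the mirror-point case. Unlike a cone or dihedral point, a mirror point has a tangent cone of full diameter $\pi$, so the hypothesis of Lemma~\ref{technicallemma1} is not automatic and must be extracted from the geometry: the crucial observations are that the nearest-pair configuration forces $\gamma$ to depart orthogonally from the smooth mirror locus, placing the inward normal into $\apq$, and that comparing \emph{whole components} rather than fixed representatives is exactly what makes this orthogonality available while still delivering an $r$-separated point set to Lemma~\ref{technicallemma2}. A minor additional case arises when the nearest point of a reflector crown is one of its dihedral corners; there the diameter bound $\pt$ on the tangent cone again makes the hypothesis automatic, so the mirror-arc and corner cases together cover every component.
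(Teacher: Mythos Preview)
Your proposal is correct and follows essentially the same route as the paper: both reduce to Lemma~\ref{technicallemma1} via the tangent-cone diameter bound at cone and dihedral points and the nearest-pair orthogonality argument at mirror points, then finish with the packing bound of Lemma~\ref{technicallemma2}. The only difference is organizational---the paper separates by component type (bounding cone points, dihedral points, and mirror circles individually and summing), which incidentally yields Proposition~\ref{dihedralpoints} as a byproduct, whereas you treat all components at once and case-split on the type of the nearest point; the mathematical content is the same.
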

\begin{proof} 

Let $O\in\okdvt$.  The desired upper bound is the sum of upper bounds $B_C$ on the number of cone points in $O$, $B_R$ on the number of reflector crowns in $O$, and $B_M$ on the number of mirror circles in $O$.  We derive each of these bounds and confirm that each depends only on $k, D$ and $v$.  

The proof of the existence of the upper bound $B_C$ is the same as the proof of a similar bound in Proposition 8.3 of \cite{St}.  The bound is achieved by showing that if $p$ and $q$ are any two cone points in an orbifold in $\okdvt$, then since the isotropy groups of $p$ and $q$ are cyclic, it must be the case that there exists $\alpha \in (0,
  \frac{\pi}{2})$ such that $\frak{d}_{pq}(\tfrac{\pi}{2} + \alpha) =
S_pO \ \text{and} \ \frak{d}_{qp}(\tfrac{\pi}{2} + \alpha) = S_qO$.   By Lemma~\ref{technicallemma1} we conclude that $p$ and $q$ are at least a distance $r$ apart from each other.  An application of Lemma~\ref{technicallemma2} implies that there is a universal constant $B_C$ bounding the number of cone points in an orbifold in $\okdvt$.

Essentially the same argument can be used to obtain the bound $B_R$ on the number of reflector crowns.  In this case, we know that there must be at least one dihedral point per crown so we show there is a universal bound, $B_D$, on the number of dihedral points in an orbifold in $\okdvt$.  We first note that the isotropy group $\Gamma$ of any dihedral point contains a cyclic subgroup and then follow the argument for the cone point case.

Obtaining the bound $B_M$ on the number of mirror circles in $O$ is a bit more subtle.  To begin, list the mirror circles in $O$ as $S_1, S_2, \dots, S_m$.  For $1 \le j \le m$, let $q_j$ be a point on mirror circle $S_j$.  Mirror circles in $O$ are non-intersecting since any point of intersection would fail to have one of the three singular structures mentioned above.  Thus the set $Q = \{q_1, q_2, \dots, q_m\}$ contains $m$ distinct points.  As in the other two cases, we will show that any two points in $Q$ are a distance greater than or equal to $r$ apart and apply Lemma \ref{technicallemma2} to obtain the required bound.  

Suppose $S_i$ and $S_j$ are distinct mirror circles in $O$.  The distance between these circles, $d(S_i, S_j)$, is the infimum of the distance function (see Section~\ref{globalstructures})
restricted to $S_i \times S_j$.  We will show that there is an $r>0$ for which $d(S_i, S_j)>r$, and in so doing conclude that points in set $Q$ are pairwise greater than or equal to $r$ apart.  Begin by observing that because $S_i \times S_j$ is compact, we can take $(p_i,p_j) \in S_i \times S_j$ an ordered pair for which $d(p_i,p_j)=d(S_i, S_j)$.  Also, because $S_i$ and $S_j$ are nonintersecting closed sets, $d(S_i, S_j)>0$.  Let $\gamma$ be the segment of length $d(p_i,p_j)$ running from $p_i$ to $p_j$.  

We claim that the segment $\gamma$ is perpendicular to mirror circle $S_i$ at $p_i$ and to mirror circle $S_j$ at $p_j$.  Take a distinguished coordinate chart $\ccu$ for neighborhood $U$ about $p_i$ such that $\wtu$ is an open neighborhood of the origin in $\mathbb{R}^2$, $\pi_U(0) = p_i$, and $\Gamma_U = \{id, \rho\}$ where $id$ is the identity map on $\wtu$ and $\rho$ is reflection of $\wtu$ across the $y$-axis.  By Proposition 15 in \cite{Bz}, there exists a point $z$ in $U$ that lies on $\gamma$ but does not lie on the mirror edge of $U$.  This means $\pi_U^{-1}(z) = \{\tilde{z}_1, \tilde{z}_2\}$ with $\tilde{z}_1$ and $\tilde{z}_2$ distinct in $\wtu$ and $\rho(\tilde{z}_1) = \tilde{z}_2$.  Let $\tilde{\gamma}_1$ be the lift of $\gamma$ that connects $0$ and $\tilde{z}_1$.  Because $\gamma$ minimizes the distance from $S_i$ to $S_j$, segment $\tilde{\gamma}_1$ achieves the minimal distance from $\tilde{z}_1$ to the $y$-axis in $\wtu$.  Observing that the $y$-axis is a submanifold of $\wtu$, we see that $\tilde{\gamma}_1$ is orthogonal to the $y$-axis at $0$.  We conclude that $\gamma$ is perpendicular to mirror circle $S_i$ at $p_i$.  The argument for $p_j$ on $S_j$ is similar.

Now $\frak{d}_{p_i p_j}\subset S_{p_i}O$ contains the initial tangent vector to $\gamma$.  Because $\gamma$ is perpendicular to the mirror circle $S_i$ at $p_i$, this initial vector lifts to two antipodal vectors in $S_{0}\wtu$.  Thus no matter the value of $\alpha>0$ from Lemma~\ref{technicallemma1}, the fact that $\alpha$ is nonzero implies $\frak{d}_{p_i p_j}(\frac{\pi}{2}+\alpha)=S_{p_i}O$.  Similarly $\frak{d}_{p_j p_i}(\frac{\pi}{2}+\alpha)=S_{p_j}O$.  Having satisfied the hypotheses of Lemma~\ref{technicallemma1}, we obtain $r>0$ such that $d(S_i,S_j)=d(p_i,p_j) \ge r$.

We have shown that points in set $Q$ are pairwise greater than or equal to $r$ apart.  Lemma~\ref{technicallemma2} provides universal bound $B_M$ on the number of points in $Q$, and thus on the number of mirror circles in $O$.
\end{proof}

Note that the universal constant $B_D$ obtained in the proof above is the one required to prove the following proposition.

\begin{proposition}\label{dihedralpoints}  The number of dihedral points in an 2-orbifold in $\okdvt$ is universally bounded above.
\end{proposition}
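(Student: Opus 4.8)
The plan is to observe that this proposition has, in effect, already been established within the proof of Proposition~\ref{components}. While bounding the number $B_R$ of reflector crowns there, I produced a universal constant $B_D$ bounding the number of dihedral points in any $O \in \okdvt$, and it is precisely this $B_D$ that proves the present statement. So the proof amounts to extracting that bound and recording it as a separate conclusion rather than carrying out new work.

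For completeness I would recall the mechanism that produces $B_D$. Let $p$ and $q$ be any two dihedral points of $O$. The isotropy group at a dihedral point is a dihedral group, which contains a nontrivial cyclic subgroup of rotations. This cyclic structure is exactly the feature exploited in the cone-point case of Proposition~\ref{components}: it forces the existence of $\alpha \in (0, \frac{\pi}{2})$ with $\apq(\frac{\pi}{2} + \alpha) = S_pO$ and $\aqp(\frac{\pi}{2} + \alpha) = S_qO$. Lemma~\ref{technicallemma1} then guarantees $d(p,q) \ge r$ for a constant $r$ depending only on $k$, $D$, $v$, and Lemma~\ref{technicallemma2} (applicable since a lower sectional bound of $k$ gives, in dimension two, the Ricci bound $(n-1)k = k$) converts this pairwise separation into a universal upper bound $B_D = C(r,k,D,2)$ on the number of dihedral points.

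The only point requiring care, and it is minor, is confirming that the implication \emph{cyclic isotropy subgroup $\Rightarrow$ the direction-filling condition of Lemma~\ref{technicallemma1}} survives when the ambient isotropy group is dihedral rather than merely cyclic. This is immediate: passing from the rotation subgroup to the full dihedral group only introduces further identifications (via the reflections) of the unit tangent cone $S_pO$, shrinking it and thereby making it even easier for the directions toward $q$ to fill the cone within angle $\frac{\pi}{2} + \alpha$. Hence the hypotheses of Lemma~\ref{technicallemma1} continue to hold and the cone-point argument transfers essentially verbatim. I expect no genuine obstacle; the substance of the result already resides in the two technical lemmas and in the argument of Proposition~\ref{components}.
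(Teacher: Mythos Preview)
Your proposal is correct and mirrors the paper's own treatment exactly: the paper simply notes that the universal constant $B_D$ produced in the proof of Proposition~\ref{components} (via the cyclic subgroup of the dihedral isotropy, Lemma~\ref{technicallemma1}, and Lemma~\ref{technicallemma2}) is precisely the bound required here. Your added remark that passing to the full dihedral group only shrinks $S_pO$ and hence preserves the direction-filling hypothesis is a helpful clarification, but the core argument is the same.
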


\vspace{15mm}


\section{Controls on underlying space topology}\label{topology}

Using Perelman's Stability Theorem for Alexandrov spaces, we observe that the underlying space of an orbifold in $\okdvn$ has one of only a finite number of homeomorphism types.  We begin by recalling Perelman's theorem, denoting the Gromov-Hausdorff metric by $d_{GH}$.  Perelman \cite{P} originally proved this result in 1991, but it remained in preprint form.  Kapovitch \cite{K} ultimately wrote the result for wider distribution.

\begin{theorem}\label{stability}  Suppose $X$ is an $n$-dimensional Alexandrov space with curvature bounded below by $k$.  Then there exists $\varepsilon = \varepsilon(X)>0$ such that if $Y$ is an $n$-dimensional Alexandrov space and $d_{GH}(X,Y)<\varepsilon$, then $Y$ is homeomorphic to $X$.
\end{theorem}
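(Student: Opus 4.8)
The plan is to treat this as the deep structural result it is: in practice one invokes it as a black box (Perelman \cite{P}, Kapovitch \cite{K}), but the genuine proof rests on first understanding the local topology of Alexandrov spaces with a lower curvature bound and then promoting that local understanding to a global homeomorphism. The natural starting point is that such a space $X$ has a well-defined integer dimension $n$ and that, at each point $p$, the space of directions $\Sigma_p X$ is itself an $(n-1)$-dimensional Alexandrov space with curvature bounded below by $1$; moreover a neighborhood of $p$ is homeomorphic to the open cone over $\Sigma_p X$. This realizes $X$ as a space with iterated conical (stratified) structure and sets up the inevitable induction on the dimension $n$.

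The core engine is Perelman's Morse theory for distance-type functions. One shows that a suitable tuple of distance functions $f = (\mathrm{dist}_{a_1},\dots,\mathrm{dist}_{a_k})$ is \emph{regular} at a point whenever the configuration of directions to the $a_i$ is sufficiently spread out in $\Sigma_p X$, and that near such a regular point $f$ behaves like a topological submersion: locally $X$ splits, up to homeomorphism, as a Euclidean factor times a cone, with $f$ the projection. The key quantitative input is that this regularity, and the geometry of $\Sigma_p X$ controlling it, is \emph{stable} under Gromov--Hausdorff approximation. Thus if $d_{GH}(X,Y)$ is small enough, the analogous distance functions on $Y$ are regular with the same combinatorics, so $Y$ inherits a matching local fibration structure. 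I would then cover $X$ by a finite net at a scale dictated by $X$, build a local homeomorphism from each chart of $X$ to the corresponding chart of $Y$ out of these parametrized conical models, and patch the pieces together.

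The hard part is exactly this patching. Producing the individual local homeomorphisms is comparatively routine given the conical structure and the stability of directions; assembling them into a single global homeomorphism requires Perelman's gluing theorem for maps between stratified conical spaces, together with the inductive control that the homeomorphism already built over the lower-dimensional strata (the singular set, supplied by the inductive hypothesis in dimension $n-1$) extends compatibly across the top stratum. Managing the nested hierarchy of scales so that all the local constructions hold simultaneously is precisely what forces $\varepsilon$ to depend on the geometry of $X$ and not on $n$ and $k$ alone: a thin neck in $X$ can be matched by $Y$ only below a threshold that $X$ itself determines. This non-uniformity is genuine, and it is the reason the theorem must be stated with $\varepsilon = \varepsilon(X)$.
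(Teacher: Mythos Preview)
The paper does not prove this statement at all: it is quoted as Perelman's Stability Theorem, attributed to \cite{P} and \cite{K}, and invoked as a black box in the proof of Proposition~\ref{ustoptype}. Your opening sentence already anticipates this (``in practice one invokes it as a black box''), and that is precisely the paper's approach; everything after your first sentence goes well beyond what the paper provides.

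That said, your sketch of the actual proof is an accurate high-level account of the argument in \cite{K}: the conical local structure via spaces of directions, Perelman's Morse theory for distance maps and the notion of regularity, the Gromov--Hausdorff stability of regular points, the induction on dimension, and the gluing theorem as the genuinely hard step. You are also right that the dependence $\varepsilon = \varepsilon(X)$ is essential and cannot be made uniform in $n$ and $k$. For the purposes of this paper, however, none of that is needed; a citation suffices.
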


This theorem applies in our context because an orbifold with sectional curvature bounded below by $k$ is an example of an Alexandrov space with curvature bounded below by $k.$   We obtain our underlying space topological finiteness result with the following lemma and a compactness argument.

\begin{lemma}\label{precompact}  The set $\okdvn$ is precompact relative to the Gromov-Hausdorff metric.  Limit points of this set are Alexandrov spaces of dimension $n$ with the same lower bound on curvature and upper bound on diameter.
\end{lemma}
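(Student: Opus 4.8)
The plan is to deduce precompactness from Gromov's precompactness criterion, which asserts that a family of compact metric spaces is precompact in the Gromov--Hausdorff topology precisely when it is uniformly totally bounded: there is a common upper bound on diameters, and for every $\varepsilon > 0$ a common upper bound $N(\varepsilon)$ on the number of $\varepsilon$-balls needed to cover any member of the family. The diameter bound is built into the definition of $\okdvn$, so the real work is to produce the uniform covering bound $N(\varepsilon)$, and then separately to identify the limit spaces.

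For the covering bound I would reuse the volume-comparison argument already carried out in Lemma~\ref{technicallemma2}. Fix $\varepsilon > 0$ and let $O \in \okdvn$. Since $O$ is compact and connected it admits a minimal $\varepsilon$-net $\{x_1, \dots, x_N\}$, and the balls $B(x_i, \varepsilon/2)$ are pairwise disjoint. Choosing the net point whose $\varepsilon/2$-ball has least volume and applying the Relative Volume Comparison Theorem for orbifolds from \cite{Bz} exactly as in Lemma~\ref{technicallemma2}, I obtain
\begin{align*}
N \le \frac{\vol \ccb(D)}{\vol \ccb(\varepsilon/2)},
\end{align*}
a bound depending only on $\varepsilon, k, D$ and $n$, and not on the particular orbifold $O$. (Note that the lower volume bound $v$ plays no role in this estimate; it enters only later, to control the dimension of the limit.) This $N(\varepsilon)$ gives uniform total boundedness, so Gromov's criterion yields that $\okdvn$ is precompact with respect to $d_{GH}$.

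It remains to identify the limit points. Each $O \in \okdvn$ has sectional curvature bounded below by $k$ and hence, as observed after Theorem~\ref{stability}, is an $n$-dimensional Alexandrov space of curvature $\ge k$. The class of Alexandrov spaces with a fixed lower curvature bound is closed under Gromov--Hausdorff limits, since the defining triangle-comparison inequalities pass to the limit; thus any limit $X$ of a sequence in $\okdvn$ again has curvature bounded below by $k$. The diameter bound likewise survives, because diameter is continuous with respect to $d_{GH}$, so $X$ has diameter at most $D$.

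The delicate point, which I expect to be the main obstacle, is showing that the limit remains genuinely $n$-dimensional. A priori, a Gromov--Hausdorff limit of $n$-dimensional Alexandrov spaces of curvature $\ge k$ is again an Alexandrov space of curvature $\ge k$, but its dimension may drop under collapse. Here the uniform lower volume bound $v > 0$ is decisive: I would invoke the non-collapsing theory for Alexandrov spaces, according to which the $n$-dimensional Hausdorff measure is continuous along non-collapsing sequences and the dimension can fall only when the volume degenerates to zero. Since every member of $\okdvn$ has volume at least $v$, no such degeneration can occur, and so the limit $X$ has Hausdorff dimension exactly $n$. Combining these observations, every limit point of $\okdvn$ is an $n$-dimensional Alexandrov space with the same lower curvature bound $k$ and upper diameter bound $D$, as claimed.
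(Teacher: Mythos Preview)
Your proposal is correct and follows essentially the same approach as the paper: Gromov's precompactness criterion is verified via the diameter bound together with the packing bound supplied by the relative volume comparison argument of Lemma~\ref{technicallemma2}, limit points are identified as Alexandrov spaces of curvature $\ge k$ and diameter $\le D$ by closure under Gromov--Hausdorff convergence, and the lower volume bound is invoked to rule out collapse and preserve dimension $n$. The paper's version is terser, citing Theorem~10.7.2 and Corollary~10.10.11 of \cite{BuBuI} in place of your more explicit arguments, but the substance is the same.
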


\begin{proof}  The precompactness result follows from Gromov's Compactness Theorem (Theorem 10.7.2 in \cite{BuBuI}) and relies on the universal upper diameter bound $D$ and the universal constant given in Lemma~\ref{technicallemma2}.  Theorem 10.7.2 in \cite{BuBuI} implies that the limit points are Alexandrov spaces with curvatures bounded below by $k$ and diameters bounded above by $D$.  By Corollary 10.10.11 in \cite{BuBuI}, the lower volume bound on $\okdvn$ prevents collapsing, so the dimension of any limit space is $n$.
\end{proof}

\begin{prop}\label{ustoptype}  The underlying space of an orbifold in $\okdvn$ has one of only a finite number of homeomorphism types.
\end{prop}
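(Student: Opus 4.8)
The plan is to run the standard covering argument that marries precompactness to Perelman's Stability Theorem. First I would record the key structural fact, noted just before Lemma~\ref{precompact}, that every orbifold in $\okdvn$ is itself an $n$-dimensional Alexandrov space with curvature bounded below by $k$. Consequently the Gromov-Hausdorff closure $\overline{\okdvn}$ lies entirely within the class of $n$-dimensional Alexandrov spaces with curvature $\ge k$ and diameter $\le D$: by Lemma~\ref{precompact} the set $\okdvn$ is precompact, its limit points inherit the same curvature and diameter bounds, and, crucially, they remain $n$-dimensional because the lower volume bound $v$ prevents collapse. Since the closure of a precompact set is compact in the $d_{GH}$ metric, $\overline{\okdvn}$ is a compact metric space.

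Next I would invoke the local rigidity furnished by Theorem~\ref{stability}. For each $X \in \overline{\okdvn}$ the theorem supplies an $\varepsilon(X)>0$ so that every $n$-dimensional Alexandrov space lying within Gromov-Hausdorff distance $\varepsilon(X)$ of $X$ is homeomorphic to $X$. Because every element of $\overline{\okdvn}$ is exactly such a space, the open ball of $d_{GH}$-radius $\varepsilon(X)$ about $X$ intersects $\overline{\okdvn}$ only in spaces homeomorphic to $X$. These balls form an open cover of the compact space $\overline{\okdvn}$, so I would extract a finite subcover centered at finitely many spaces $X_1, \dots, X_N$.

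Finally I would conclude. Any orbifold $O \in \okdvn$ belongs to at least one ball of the finite subcover, say the one centered at $X_i$; being an $n$-dimensional Alexandrov space, $O$ is then homeomorphic to $X_i$ by Theorem~\ref{stability}. Hence the underlying space of $O$ is homeomorphic to one of the finitely many spaces $X_1, \dots, X_N$, which is precisely the assertion of the proposition.

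I expect the only real subtlety to be the matching of dimension: Perelman's theorem requires \emph{both} $X$ and the nearby space to be $n$-dimensional, yet a Gromov-Hausdorff limit of $n$-dimensional Alexandrov spaces can a priori drop dimension under collapse. This is exactly the point at which the lower volume bound is indispensable, and Lemma~\ref{precompact} has already packaged the requisite non-collapsing statement. With that fact in hand, the remainder is a routine compactness-plus-rigidity sweep rather than a genuine obstacle.
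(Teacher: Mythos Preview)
Your argument is correct and follows essentially the same approach as the paper: both combine the precompactness and non-collapsing of Lemma~\ref{precompact} with Perelman's Stability Theorem~\ref{stability}. The only cosmetic difference is that you phrase the compactness step as a finite-subcover argument on $\overline{\okdvn}$, whereas the paper argues by contradiction via a convergent subsequence; these are equivalent packagings of the same idea.
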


\begin{proof}  Arguing by contradiction, we suppose that $\{O_i\}$ is an infinite sequence  of orbifolds in $\okdvn$ each having an underlying space of a distinct homeomorphism type.  Lemma~\ref{precompact} implies that this sequence has a $d_{GH}$-convergent subsequence, and that the limit space $X$ is an $n$-dimensional Alexandrov space with curvature bounded below by $k$.  If we choose $\varepsilon = \varepsilon(X)$ as in Theorem~\ref{stability}, there exists $N$ such that $d_{GH}(O_i, X)<\varepsilon$ for all $i>N$.  But then each $O_i$, with $i>N$, must be homeomorphic to $X$. This is a contradiction.
\end{proof}


\section{Finiteness of orbifold diffeomorphism types}\label{finiteoflddiffeo}

We use the finiteness results from Propositions \ref{components}, \ref{dihedralpoints} and \ref{ustoptype}, as well as a result from \cite{St}, to prove orbifold diffeomorphism finiteness for orbifolds in $\okdvt$.  Once this result is established, a brief argument shows that orbifolds in $\spect$ have only finitely many orbifold diffeomorphism types.

For ease of exposition, we break $\okdvt$ into four disjoint subsets following Theorem 13.3.6 in \cite{T}.  In particular we write
\[\okdvt = \mathcal{B} \sqcup \mathcal{E} \sqcup \mathcal{P} \sqcup \mathcal{H}\]
where $\mathcal{B}$ are the bad orbifolds, $\mathcal{E}$ the elliptic orbifolds, $\mathcal{P}$ the parabolic orbifolds, and $\mathcal{H}$ the hyperbolic orbifolds.

Our argument begins with a lemma from which orbifold diffeomorphism finiteness for non-hyperbolic 2-orbifolds follows immediately.

\begin{lemma}\label{isotropy}  Orbifolds in $\okdvt$ contain points of only finitely many possible isotropy types.
\end{lemma}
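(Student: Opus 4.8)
The plan is to show that the isotropy type of any singular point in a 2-orbifold is forced into a finite list by the geometric bounds, using the structure theory of singular points in dimension two together with the finiteness results already established. Recall that a singular point in a smooth 2-orbifold is either a cone point (isotropy a cyclic group $\mathbb{Z}_n$ of rotations), a mirror point (isotropy $\mathbb{Z}_2$ generated by a reflection), or a dihedral point (isotropy a dihedral group $D_n$). The mirror points contribute only the single isotropy type $\mathbb{Z}_2$, so the content of the lemma lies in bounding the orders appearing in the cone-point and dihedral-point isotropy groups. Since there are only finitely many isomorphism types of cyclic and dihedral groups of bounded order, it suffices to produce a universal upper bound on the order $n$ of the cyclic rotation subgroup attached to any singular point of $O \in \okdvt$.

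First I would reduce everything to bounding the order of a cyclic isotropy group. For a cone point $p$ of order $n$, take a distinguished chart $\ccu$ with $\Gamma_U = \mathbb{Z}_n$ acting by rotations on a geodesic ball $\wtu \subset \mathbb{R}^2$ about $\tp$. The key geometric observation is that the unit tangent cone $S_pO$ is a circle of circumference $2\pi/n$: the total angle at the cone point is $2\pi/n$, which shrinks as $n$ grows. I would exploit this by comparing volumes of small geodesic balls. Using the Relative Volume Comparison inequality~(\ref{inequality}) from the proof of Lemma~\ref{technicallemma2}, together with the volume lower bound $v$ and diameter upper bound $D$, one gets a uniform positive lower bound on the volume of a small metric ball $B(p, s)$ for fixed small $s$. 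On the other hand, the volume of such a ball in the orbifold is the volume of the corresponding ball upstairs in $\wtu$ divided by $|\Gamma_U| = n$ (the local cover is $n$-to-one away from the singular point). Comparing this with the manifold volume comparison bound $\vol B^n_k(s)$ forces $n$ to be bounded above by a constant $C(k, D, v)$. The same argument handles dihedral points by passing to the cyclic subgroup and noting that the full dihedral group has order $2n$.

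An alternative, and perhaps cleaner, route uses the finiteness already proved rather than a fresh volume estimate. I would argue by contradiction: if the orders were unbounded, one could extract a sequence $O_i \in \okdvt$ with singular points of isotropy order tending to infinity, then invoke Gromov-Hausdorff precompactness from Lemma~\ref{precompact} to pass to a limit Alexandrov space. The shrinking cone angles $2\pi/n_i \to 0$ would force the limit space to develop a singularity incompatible with its being an $n$-dimensional (here $2$-dimensional) Alexandrov space of curvature bounded below by $k$ arising as a non-collapsed limit; concretely the space of directions at the limiting singular point would degenerate below the lower bound forced by the curvature condition. This contradicts the non-collapsing conclusion of Lemma~\ref{precompact}. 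Either approach yields the same bound, and I expect the direct volume-comparison argument to be the more self-contained one to write.

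The main obstacle I anticipate is making the local volume computation fully rigorous at the singular point, where the naive ``divide by $|\Gamma_U|$'' must be justified through the orbifold measure and the fact that the fixed-point set of the group action has measure zero; one must be careful that the comparison ball in $\wtu$ is genuinely a Riemannian ball to which the manifold volume comparison applies, and that the radius $s$ can be chosen uniformly across all $O \in \okdvt$ (which follows from the uniform geometric bounds). Once the local volume of $B(p,s)$ is pinned between a uniform lower bound coming from $v, D$ via~(\ref{inequality}) and an upper bound of the form $\vol B^2_k(s)/n$, the inequality immediately caps $n$, and the finiteness of the isotropy types follows since cyclic and dihedral groups of bounded order form a finite set.
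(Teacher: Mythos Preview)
Your primary argument via volume comparison is correct and essentially reconstructs what the paper obtains by quotation: the paper's entire proof of this lemma is a one-line citation to Main Theorem~1 of \cite{St} (together with Remark~\ref{spectrumremark} to drop the codimension-two hypothesis). Your volume estimate is the natural way to prove that cited result in dimension two, and in fact the bound you derive simplifies nicely: combining $\vol B(p,s) \ge v\cdot \vol B^2_k(s)/\vol B^2_k(D)$ from the relative comparison with $\vol B(p,s) \le \vol B^2_k(s)/|\Gamma_U|$ from the distinguished chart gives $|\Gamma_U| \le \vol B^2_k(D)/v$, so the dependence on the auxiliary radius $s$ disappears entirely and your worry about choosing $s$ uniformly is moot. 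What the paper's approach buys is brevity and generality (the cited theorem works in all dimensions); what your approach buys is self-containment.

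One caution about your alternative Gromov--Hausdorff route: the claim that the space of directions at the limit point ``would degenerate below the lower bound forced by the curvature condition'' is not quite right as stated. An Alexandrov surface of curvature $\ge k$ can have arbitrarily small cone angle at a point; the curvature lower bound imposes no positive lower bound on the length of the circle of directions. To make that argument work you would need to invoke the volume lower bound again (non-collapsing plus a local volume estimate at the limit point), at which point you are essentially redoing the direct volume-comparison argument in the limit. So the first route is both cleaner and more honest.
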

\begin{proof}  This follows directly from Main Theorem 1 in \cite{St} and Remark~\ref{spectrumremark}.
\end{proof}

\begin{prop}\label{okdvtprop1}
The subset $\mathcal{B}\sqcup \mathcal{E} \sqcup \mathcal{P} \subset \okdvt$ contains orbifolds of only finitely many orbifold diffeomorphism types.  
\end{prop}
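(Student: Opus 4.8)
The plan is to exploit the fact that closed bad, elliptic, and parabolic 2-orbifolds are, by Thurston's classification (Theorem 13.3.6 in \cite{T}), described by a very short list of combinatorial shapes, each of which is rigid apart from the integer labels attached to its singular points. First I would recall the list explicitly. The parabolic (Euclidean) 2-orbifolds form a finite set with no free parameters, so $\mathcal{P}$ contributes only finitely many diffeomorphism types outright. The bad 2-orbifolds are the teardrops $S^2(n)$, the spindles $S^2(n_1,n_2)$ with $n_1\neq n_2$, and their reflector analogues supported on the disk; the elliptic 2-orbifolds are the footballs $S^2(n,n)$, the orbifolds $S^2(2,2,n)$, the three platonic orbifolds $S^2(2,3,3)$, $S^2(2,3,4)$, $S^2(2,3,5)$, together with their quotients carrying mirror boundary. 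In every one of these families the underlying space lies in the finite list $\{S^2, D^2, \mathbb{RP}^2\}$, and the number of cone points and corner reflectors is bounded by a universal constant: since for any closed 2-orbifold the orbifold Euler characteristic satisfies $\chi(O) \ge 0$ in these three classes while each cone point subtracts at least $\tfrac12$ and each corner reflector at least $\tfrac14$, only boundedly many singular points can occur.

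Next I would observe that, within each family, the orbifold diffeomorphism type is completely determined by the underlying space together with the unordered collection of orders of its cone points and corner reflectors. This is precisely the content of the classification: two such orbifolds are orbifold diffeomorphic in the sense of Definition~\ref{diffeo} if and only if they carry the same labeled singular data. Consequently, the only mechanism by which $\mathcal{B}\sqcup\mathcal{E}\sqcup\mathcal{P}$ could contain infinitely many diffeomorphism types is through the integer labels $n, n_1, n_2, \dots$ becoming arbitrarily large.

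To rule this out I would invoke Lemma~\ref{isotropy}: orbifolds in $\okdvt$ realize only finitely many isotropy types. Since a cone point of order $n$ has cyclic isotropy $\mathbb{Z}_n$ and a corner reflector of order $n$ has dihedral isotropy $D_n$, the finiteness of isotropy types yields a uniform upper bound $N_0 = N_0(k,D,v)$ on all such labels across the entire collection $\okdvt$. Combining this bound with the finite list of underlying spaces and the universal bound on the number of singular points, there remain only finitely many possible labeled configurations, hence only finitely many orbifold diffeomorphism types in $\mathcal{B}\sqcup\mathcal{E}\sqcup\mathcal{P}$.

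The step I expect to require the most care is the bookkeeping in the first paragraph: one must check against Thurston's list that every bad, elliptic, or parabolic 2-orbifold genuinely has underlying space drawn from a finite set and only a bounded number of singular points, so that once Lemma~\ref{isotropy} caps the labels the resulting count is truly finite. It is worth emphasizing that no analogue of this rigidity holds for the hyperbolic orbifolds $\mathcal{H}$, whose underlying genus, number of cone points, and number of boundary components are a priori unbounded; this is why the hyperbolic case is treated separately and must instead be controlled through Propositions~\ref{components}, \ref{dihedralpoints}, and~\ref{ustoptype}.
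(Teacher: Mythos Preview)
Your argument is correct and follows essentially the same route as the paper's own proof: both rely on Thurston's classification of bad, elliptic, and parabolic 2-orbifolds together with Lemma~\ref{isotropy} to cap the isotropy orders. The paper compresses this into a three-line proof by contradiction, whereas you spell out the classification and argue directly, but the substance is identical.
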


\begin{proof}  Suppose $\mathcal{B}\sqcup \mathcal{E} \sqcup \mathcal{P}$ contains orbifolds of infinitely many orbifold diffeomorphism types.  By Thurston's classification of bad, elliptic and parabolic 2-orbifolds, this implies orbifolds in this collection contain points of arbitrarily large order isotropy type.  This contradicts Lemma~\ref{isotropy}.
\end{proof}

We next consider the case of the hyperbolic orbifolds $\mathcal{H} \subset \okdvt$.  In what follows, we say that two reflector crowns \emph{have the same type} if they have the same number of dihedral points and if, when listed in order, the isotropy types of the dihedral points in the first reflector crown match the isotropy types of the dihedral points in the second reflector crown, up to a cyclic permutation.

\begin{prop}\label{okdvtprop2}
The subset $\mathcal{H} \subset \okdvt$ contains orbifolds of only finitely many orbifold diffeomorpism types.  
\end{prop}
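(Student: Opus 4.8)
The plan is to prove finiteness of diffeomorphism types among hyperbolic 2-orbifolds in $\mathcal{H}$ by controlling all the combinatorial data that determines the orbifold. By Thurston's classification, a closed 2-orbifold is determined up to diffeomorphism by the homeomorphism type of its underlying space together with the complete ``labeled'' singular data: the isotropy types of the cone points, the isotropy types of the dihedral points arranged along each reflector crown, and the number of mirror circles. My strategy is to show each of these pieces of data ranges over only finitely many possibilities for orbifolds in $\mathcal{H} \subset \okdvt$, and then to argue that only finitely many ways of assembling them can occur.

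First I would invoke Proposition~\ref{ustoptype} to conclude that the underlying space $X_O$ of any $O \in \mathcal{H}$ is one of only finitely many homeomorphism types. Next, I would bound the singular data. Proposition~\ref{components} bounds the total number of connected components of the singular set (cone points, reflector crowns, and mirror circles), and Proposition~\ref{dihedralpoints} bounds the total number of dihedral points; in particular the number of dihedral points appearing on any single reflector crown is bounded, so each reflector crown has one of finitely many possible lengths. Lemma~\ref{isotropy} then guarantees that every cone point and every dihedral point carries an isotropy type drawn from a single finite list. Combining these, each reflector crown is of one of finitely many \emph{types} (finitely many dihedral points, each labeled by finitely many isotropy types, up to cyclic permutation), and each cone point carries one of finitely many labels.

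The final step is to count assemblies. Since the number of singular components is universally bounded and each component is of one of finitely many labeled types, and since the underlying surface is one of finitely many homeomorphism types, there are only finitely many ways to place a bounded number of labeled singular components (cone points, mirror circles, reflector crowns of bounded type) into one of finitely many underlying surfaces. By Thurston's classification this finite combinatorial data determines the orbifold up to diffeomorphism, so $\mathcal{H}$ contains orbifolds of only finitely many orbifold diffeomorphism types.

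The step I expect to require the most care is the last one: verifying that the combinatorial data I have bounded genuinely determines the orbifold diffeomorphism type, and that placements differing only by a homeomorphism of the underlying space yield diffeomorphic orbifolds rather than being overcounted. Concretely, one must confirm that two hyperbolic 2-orbifolds with the same underlying surface, the same unordered collection of labeled cone points, the same number of mirror circles, and the same collection of reflector crown types are in fact orbifold-diffeomorphic. This is where Peter Storm's clarifying suggestion (acknowledged in the introduction) is presumably used; the argument should reduce the placement of singular data on a fixed surface to the action of the mapping class group, which permutes only finitely many configurations of a bounded set of marked, labeled points and boundary circles, so that finitely many equivalence classes remain.
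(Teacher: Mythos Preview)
Your outline matches the paper's strategy exactly: partition $\mathcal{H}$ by underlying homeomorphism type together with the full labeled singular data, use Propositions~\ref{components}, \ref{dihedralpoints}, \ref{ustoptype} and Lemma~\ref{isotropy} to see that the partition is finite, and then argue that any two orbifolds in the same partition class are orbifold diffeomorphic. The difference lies entirely in how that last step is carried out.

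You treat ``same combinatorial data $\Rightarrow$ orbifold diffeomorphic'' essentially as a consequence of Thurston's classification, with a sketched justification via the mapping class group moving marked points and boundary circles. The paper instead gives an explicit construction: it writes $O_i = \mathbb{H}^2/\Gamma_i$, invokes Macbeath's classification of non-Euclidean crystallographic groups to obtain an abstract isomorphism $\varphi:\Gamma_1\to\Gamma_2$ from the matched signature data, then uses a boundary-rigidity result (Kapovich, Theorem~8.16) to produce a $\varphi$-equivariant quasi-M\"obius map on $\partial\mathbb{H}^2$, and finally takes its Douady--Earle extension to obtain a $\varphi$-equivariant \emph{diffeomorphism} of $\mathbb{H}^2$, which descends to the desired orbifold diffeomorphism. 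This is the content behind the acknowledgement to Storm.

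Your mapping-class-group route can be made to work, but note two points you would have to address that the paper's route handles automatically: (i) the homeomorphism of underlying surfaces you build must be promoted to a \emph{smooth orbifold} map in the sense of Definition~\ref{diffeo}, not merely a homeomorphism respecting singular labels; and (ii) for reflector crowns one needs the boundary-circle homeomorphism to match up the dihedral points in the correct cyclic order, which is more than ``permuting marked points.'' The paper's equivariant construction on $\mathbb{H}^2$ sidesteps both issues, since smoothness and compatibility with the local group actions are inherited from the Douady--Earle extension.
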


\begin{proof}
Partition $\mathcal{H}$ so that within each partition element orbifolds have the same number and type of cone points, number of reflector circles, number and type of reflector crowns, and underlying space homeomorphism type.  Together Propositions~\ref{components}, \ref{dihedralpoints}, \ref{ustoptype}, and Lemma~\ref{isotropy} imply that this partition has a finite number of elements.

We prove that $\mathcal{H}$ contains orbifolds of only finitely many orbifold diffeomorphism types by showing that pairs of orbifolds within a partition element must be orbifold diffeomorphic.  To begin, let $O_1$ and $O_2$ be orbifolds in the same element of the partition on $\mathcal{H}$.  Because $O_1$ and $O_2$ are smooth, Thurston's orbifold classification implies that each of these orbifolds is orbifold diffeomorphic to a quotient of the hyperbolic plane by the properly discontinuous action of a group of isometries. Denote these quotient structures by $O_1 = \mathbb{H}^2/\Gamma_1$ and $O_2 = \mathbb{H}^2/\Gamma_2$.   

In \cite{M} it is shown that because $O_1$ and $O_2$ have underlying spaces with the same genus and orientability, and they have matched singular data, there exists a group isomorphism $\varphi:\Gamma_1\to\Gamma_2$.  Because $\Gamma_1$ and $\Gamma_2$ are cocompact, an application of Theorem 8.16 in \cite{Ka}  yields a $\varphi$-equivariant quasi-M\"obius homeomorphism $f$ from the boundary circle of $\mathbb{H}^2$ at infinity to itself.  Let $\tilde f:\mathbb{H}^2 \rightarrow \mathbb{H}^2$ denote the Douady-Earle extension of $f$ (see Section 8.4 in \cite{Ka}).  The properties of the Douady-Earle extension imply that $\tilde f$ is a $\varphi$-equivariant diffeomorphism.

Because the diffeomorphism $\tilde f$ is $\varphi$-equivariant, it induces a homeomorphism $h$ of the underlying spaces of $O_1$ and $O_2$.
Using the global orbifold charts on $O_1$ and $O_2$ provided by their quotient structures, the map $\tilde f$ is precisely what is needed to conclude $h$ is a diffeomorphism of orbifolds.  Therefore $O_1$ and $O_2$ are orbifold diffeomorphic.
\end{proof}

We are now in a position to prove the two Main Theorems.

\medskip

\noindent \bf{Main Theorem 2:} \it For $D>0$, $v >0$ and $k$ fixed real numbers, let $\okdvt$ denote the set of Riemannian 2-orbifolds with sectional curvature uniformly bounded below by $k$, diameter bounded above by $D$, and volume bounded below by $v$.  The collection $\okdvt$ contains orbifolds of only finitely many orbifold diffeomorphism types.
\rm

\begin{proof}  Apply Propositions~\ref{okdvtprop1} and \ref{okdvtprop2}.
\end{proof}

\medskip

\noindent \bf{Main Theorem 1:} \it
For a fixed real number $k$, let $\spect$ denote the set of isospectral Riemannian 2-orbifolds with sectional curvature uniformly bounded below by $k$.  The collection $\spect$ contains orbifolds of only finitely many orbifold diffeomorphism types.
\rm

\begin{proof}
By the orbifold version of Weyl's asymptotic formula \cite{F}, we know that all orbifolds in $\spect$ must have the same volume $v$.  In addition Proposition 7.4 in \cite{St} states that a collection of isospectral orbifolds, satisfying a uniform lower bound $k(n-1)$ on Ricci curvature, has a corresponding upper bound $D$ on diameter.  Thus $\spect$ is a subcollection of $\okdvt$ and we apply Main Theorem 2.
\end{proof}



\begin{thebibliography}{DGGW}

\bibitem{ALR} A. Adem, J. Leida, and Y. Ruan, \emph{Orbifolds and stringy topology}, Cambridge Tracts in Mathematics, \textbf{171}, Cambridge University Press, Cambridge, 2007.

\bibitem{BPP} B. Brooks, P. Perry, and P. Petersen, \emph{Compactness and finiteness for isospectral manifolds}, J. Reine Angew. Math. \textbf{426} (1992), 67--89.

\bibitem{BuBuI}  D. Burago, Y. Burago, and S. Ivanov, \emph{A course in metric geometry}, Graduate Studies in Mathematics, \textbf{33},  American Mathematical Society, Providence, RI, 2001.

\bibitem{Bz} J. Borzellino, \emph{Orbifolds of maximal diameter}, Indiana Univ. Math. J. \textbf{42} (1993), no. 1, 37--53.

\bibitem{Ch} J. Cheeger, \emph{Finiteness theorems for Riemannian manifolds}, Amer. J. Math. \textbf{92} (1970), 61--74.

\bibitem{Chi} Y.-J. Chiang, \emph{Spectral geometry of V-manifolds and its application to harmonic maps},  Differential geometry: Partial differential equations on manifolds (Los Angeles, CA, 1990), Proc. Sympos. Pure Math. \textbf{54}, pp. 93--99, Amer. Math. Soc., Providence, RI, 1993.

\bibitem{DGGW} E. Dryden, C. Gordon, S. Greenwald, and D. L. Webb, \emph{Asymptotic expansion of the heat kernel for orbifolds}, Mich. Math. J. \textbf{56} (2008), 205--238.

\bibitem{GP} K. Grove and P. Petersen, \emph{Bounding homotopy types by geometry}, Ann. of Math. \textbf{128} (1988), 195--206.

\bibitem{F} C. Farsi, \emph{Orbifold spectral theory}, Rocky Mountain J. Math. \textbf{31} (2001), 215--235.

\bibitem{GWW} C. Gordon, D. L. Webb, and S. Wolpert, \emph{One cannot hear the shape of a drum}, Bull. Amer. Math. Soc. (N.S.). \textbf{27} (1992), 134--138.

\bibitem{K} V. Kapovitch, \emph{Perelman's stability theorem}, arXiv:math/0703002 [math.DG].

\bibitem{Ka} M. Kapovich, \emph{Hyperbolic Manifolds and Discrete Groups}, Birkh\"auser, Boston, 2001.

\bibitem{M} A. M. Macbeath, \emph{The classification of non-euclidean plane crystallographic groups}, Canad. J. Math. \textbf{19} (1967), 1192--1205. 

\bibitem{OPS} B. Osgood, R. Phillips, and P. Sarnak, \emph{Compact isospectral sets of surfaces}, J. Funct. Anal. \textbf{80} (1988), no. 1, 212--234. 

\bibitem{P} G. Perel'man, \emph{Alexandrov's spaces with curvatures bounded from below, II}, preprint, 1991.

\bibitem{S1} I. Satake, \emph{On a generalization of the notion of a manifold}, Proc. Nat. Acad. Sci. U.S.A. \textbf{42} (1956), 359--363.

\bibitem{S2} I. Satake, \emph{The Gauss-Bonnet Theorem for $V$-Manifolds}, J. Math. Soc. Japan. \textbf{9} (1957), 464--492.

\bibitem{St} E. Stanhope, \emph{Spectral bounds on orbifold isotropy}, Ann. Global Anal. Geom. \textbf{27} (2005), no. 4, 355--375. 

\bibitem{T} W. Thurston, \emph{The geometry and topology of 3-manifolds}, Lecture Notes, 1980, \{http://www.msri.org/publications/books/gt3m/\}.

\end{thebibliography}
\end{document}